
\documentclass[preprint,12pt,3p]{elsarticle}



\usepackage{amssymb, amsmath}
\usepackage{amsthm}
\usepackage{color}
\usepackage{enumitem}






\newtheorem{theorem}{Theorem}[section]
\newtheorem{lemma}[theorem]{Lemma}

\theoremstyle{definition}
\newtheorem{definition}[theorem]{Definition}

\newtheorem{example}[theorem]{Example}

\theoremstyle{remark}

\begin{document}

\begin{frontmatter}

\title{Forced oscillations of a massive point on a compact surface with boundary}

\author{Ivan Polekhin}


\ead{ivanpolekhin@gmail.com}



\begin{abstract}
We present sufficient conditions for the existence of a periodic solution for a class of systems describing the periodically forced motion of a massive point on a compact surface with a boundary. 
\end{abstract}

\begin{keyword}
periodic solution, Euler-Poincar\'{e} characteristic, nonlinear system
\end{keyword}

\end{frontmatter}


\section{Brief introduction}
\label{sec1}

In 1922, G.\,Hamel proved~\cite{hamel1922erzwungene} that equations describing motion of a periodically forced pendulum have at least one periodic solution. Since then, many results concerning periodic solutions in pendulum-like systems have been obtained by various authors including results for a one-dimensional forced pendulum~\cite{mawhin2000global}, a result by M.\,Furi and M.\,P.\,Pera~\cite{furi1991forced} who showed that a frictionless spherical pendulum also have forced oscillations and a work by V.\,Benci and M.\,Degiovanni~\cite{benci1990periodic} in which motion of a massive point on a compact boundaryless surface with friction is studied and sufficient conditions for existence of forced oscillations are presented. As far as we know, the case of compact surface with boundary is far less developed.

However, surfaces with boundaries naturally appear in various mechanical systems. For instance, in a book~\cite{courant1941elementary} by R.\,Courant and H.\,Robbins the authors consider the problem which states that for an inverted planar pendulum placed on a floor of a train carriage, for any law of motion for the train, there always exists at least one initial position such that the pendulum, starting its motion from this position with zero generalized velocity, moves without falling for an arbitrary long time. Here, the compact surface is a half-circle and its boundary is the two-pointed set.

Topological ideas, which lie in the basis of the above result, can be rigorously justified~\cite{polekhin2014examples} --- in~\cite{courant1941elementary} some details are omitted --- and generalized for different types of systems. Moreover, it was proved~\cite{polekhin2014examples} that for an inverted pendulum with a periodic law of motion for its pivot point, there exists a periodic solution along which the pendulum never becomes horizontal, i.e. it never falls. This was obtained as an application of a topological theorem by R.\,Srzednicki, K.\, W{\'o}jcik, and P.\,Zgliczy{\'n}ski~\cite{srzednicki2005fixed}.

In the current paper we further develop this result~\cite{polekhin2014examples} and present sufficient conditions for the existence of a periodic solution for a class of systems describing periodically forced motion with friction of a massive point on a compact surface with a boundary and non-zero Euler-Poincar\'{e} characteristic. We prove that if for the considered system all solutions that are tangent to the boundary are externally tangent to it, then there exists at least one periodic solution that never reaches the boundary.

\section{Main result}
\label{sec2}

\subsection{Governing equations}
\label{subsec2_1}
In this subsection, we introduce governing equations for a mechanical system consisting of a massive point moving with friction-like interaction on a surface and prove a lemma, which we are going to use further in our main theorem. For the sake of simplicity, we assume that all manifolds and considered functions are smooth (i.e. $C^\infty$). 

Let $M$ be a compact connected two-dimensional manifold with a boundary embedded in~$\mathbb{R}^3$. Manifold $M$ describes the surface on which a massive point moves. Its boundary is a finite collection of curves, which are homeomorphic to circles. We also assume that the point moves with friction, which we will specify further below.

In our further consideration, we will study the behaviour of our system in a vicinity of $\partial M$. In this regard, it is convenient to consider an enlarged manifold $M^+$. Let $M^+$ be a boundaryless connected two-dimensional manifold also embedded in $\mathbb{R}^3$ such that $M \subset M^+$. Therefore, the motion of the massive point can be described by a function of time $q \colon \mathbb{R} \to M^+$. Note that there are infinitely many possibilities for constructing $M^+$ but for our use they are all the same.

In general form, the equations of motion can be written as follows
\begin{equation*}
m\ddot q = F + F_{friction} + F_{constraint}.
\end{equation*}
Here $m$ is the mass of the point; $F \colon \mathbb{R} / T\mathbb{Z} \times TM^+ \to \mathbb{R}^3$ is a $T$-periodic force acting on the point;  $F_{friction} \colon \mathbb{R} / T\mathbb{Z} \times TM^+ \to \mathbb{R}^3$ is a friction-like force which, for a given $t$, $q$ and $\dot q$, we assume to have the following form 
\begin{equation*}
\label{eqFfric1}
F_{friction} = -\dot q \gamma(t,q,\dot q), \quad  \gamma\colon \mathbb{R} / T\mathbb{Z} \times TM^+ \to \mathbb{R}.
\end{equation*}
The force of constraint has usual form $F_{constraint} = \lambda n_q$, where $\lambda \in \mathbb{R}$ and $n_q$ is a normal vector to $M^+$ at point $q$.

Finally, assuming without loss of generality that $m = 1$, we obtain the following equations of motion
\begin{equation}
\label{MainEq}
\begin{aligned}
&\dot q = p,\\
&\dot p = F(t,q,p) - p \gamma(t,q,p) +  \lambda n_q.
\end{aligned}
\end{equation}
Note that one can get rid of unknown parameter $\lambda$ in~(\ref{MainEq}) in the usual way by projecting the right-hand sides of the above equations to $T_qM^+$.
\begin{lemma}
\label{Lemma-I}
Suppose that there exists a constant $d > 0$ such that in~(\ref{MainEq})
\begin{equation}
\label{GammaEq}
\inf_{\substack{t \in [0, T],\, q \in M \\ \| p \| > d}} \gamma(t,q,p) > 0,
\end{equation}
and $F$ is a bounded function, then for some $c > 0$ along the solutions of~(\ref{MainEq})
\begin{equation*}
\frac{d}{dt} T \Big|_{T = c} < 0, \quad T = \frac{p^2}{2}.
\end{equation*}
\end{lemma}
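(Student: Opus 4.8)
The plan is to differentiate the kinetic energy $T = \|p\|^2/2$ along the trajectories of~(\ref{MainEq}) and to show that the friction term eventually dominates the bounded forcing once $\|p\|$ is large enough. First I would compute
\begin{equation*}
\frac{d}{dt}T = \langle p, \dot p\rangle = \langle p, F(t,q,p)\rangle - \gamma(t,q,p)\,\|p\|^2 + \lambda\langle p, n_q\rangle.
\end{equation*}
The essential structural observation is that the velocity $p = \dot q$ is tangent to $M^+$, whereas $n_q$ is normal to it, so $\langle p, n_q\rangle = 0$ and the constraint force drops out entirely. This is precisely what makes the otherwise undetermined multiplier $\lambda$ disappear, leaving
\begin{equation*}
\frac{d}{dt}T = \langle p, F(t,q,p)\rangle - \gamma(t,q,p)\,\|p\|^2.
\end{equation*}

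Next I would estimate the two remaining terms. Since $F$ is bounded, there is a constant $K$ with $\|F\| \le K$, and the Cauchy--Schwarz inequality gives $\langle p, F\rangle \le K\|p\|$. For the friction term, hypothesis~(\ref{GammaEq}) supplies a constant $\gamma_0 > 0$ with $\gamma(t,q,p) \ge \gamma_0$ whenever $t \in [0,T]$, $q \in M$, and $\|p\| > d$. Combining these, at every such point,
\begin{equation*}
\frac{d}{dt}T \le K\|p\| - \gamma_0\,\|p\|^2 = \|p\|\bigl(K - \gamma_0\|p\|\bigr).
\end{equation*}

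It then remains only to choose the level $c$. Taking any $\rho > \max\{d,\,K/\gamma_0\}$ and setting $c = \rho^2/2$, the level set $\{T = c\}$ is exactly $\{\|p\| = \rho\}$; on it the right-hand side above equals $\rho\bigl(K - \gamma_0\rho\bigr) < 0$, which is the assertion of the lemma. I do not expect a serious obstacle here, as the argument is a standard a priori energy estimate; the only points requiring care are the orthogonality $\langle p, n_q\rangle = 0$ and keeping track that the friction lower bound is assumed only for $q \in M$, so that the final inequality is claimed on that set, which is all the subsequent theorem will need.
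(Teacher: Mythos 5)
Your proposal is correct and follows essentially the same route as the paper's own proof: differentiate $T$, eliminate the constraint term via $\langle p, n_q\rangle = 0$, and observe that the friction term $\gamma\|p\|^2$ dominates the bounded forcing term $\langle p,F\rangle$ for $\|p\|$ large. Your write-up merely makes explicit the choice of the level $c = \rho^2/2$ with $\rho > \max\{d, K/\gamma_0\}$, which the paper leaves implicit.
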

\begin{proof}
By direct calculation from~(\ref{MainEq}), we have
\begin{equation*}
\frac{d}{dt} T = p \cdot \dot p = (p, F) - p^2\cdot\gamma(t,q,p) + \lambda(p,n_q).
\end{equation*}
Since $n_q$ is a normal vector to $M^+$, then $(p, n_q) = 0$. Moreover, from~(\ref{GammaEq}) we have that if $\|p\|$ is large enough, then $ \|p\|^2\cdot |\gamma(t,q,p)| > \| (p, F) \|$.
\end{proof}

\subsection{Auxiliary constructions and results}
\label{subsec2_2}

The approach developed in~\cite{srzednicki2005fixed} is based on the ideas of the Wa\.{z}ewski method and the Lefschetz-Hopf theorem. In this subsection we introduce some definitions and a result from~\cite{srzednicki2005fixed} which we slightly modify for our use.
\par
Let $v \colon \mathbb{R}\times M \to TM$ be a time-dependent vector field on a manifold $M$
\begin{equation}
\label{eq1}
\dot x = v(t, x).
\end{equation}
For $t_0 \in \mathbb{R}$ and $x_0 \in M$, the map $t \mapsto x(t,t_0,x_0)$ is the solution for the initial value problem for the system~(\ref{eq1}), such that $x(0,t_0,x_0)=x_0$. If $W \subset \mathbb{R}\times M$, $t\in\mathbb{R}$, then we denote
\begin{equation*}
W_t=\{x \in M \colon (t,x) \in W\}.
\end{equation*}
\begin{definition}
Let $W \subset \mathbb{R} \times M$. Define the {exit set} $W^-$ as follows. A point $(t_0,x_0)$ is in $W^-$ if there exists $\delta>0$ such that $(t+t_0, x(t,t_0,x_0)) \notin W$ for all $t \in (0,\delta)$.
\end{definition}
\begin{definition}
We call $W \subset \mathbb{R}\times M$ a {Wa\.{z}ewski block} for the system~(\ref{eq1}) if $W$ and $W^-$ are compact.
\end{definition}
\begin{definition}
A set $W \subset [a,b] \times M$ is called a simple periodic segment over $[a,b]$ if it is a Wa\.{z}ewski block with respect to the system~(\ref{eq1}), $W = [a, b] \times Z$, where $Z \subset M$, and $W^-_{t_1} = W^-_{t_2}$ for any $t_1, t_2 \in [a, b)$.
\end{definition}
\begin{definition}
Let $W$ be a simple periodic segment over $[a,b]$. The set $W^{--} = [a,b] \times W_a^-$ is called the essential exit set for $W$.
\end{definition}
In our case, the result from~\cite{srzednicki2005fixed} can be presented as follows.

\begin{theorem} 
\label{th1}
\cite{srzednicki2005fixed} Let W be a simple periodic segment over $[a,b]$. Then the set
\begin{equation*}
U = \{ x_0 \in W_a \colon x(t-a,a,x_0) \in W_t\setminus W_t^{--}\,\mbox{for all}\,\, t \in [a,b] \}
\end{equation*}
is open in $W_a$ and the set of fixed points of the restriction $x(b-a,a,\cdot)|_U \colon U \to W_a$ is compact. Moreover, the fixed point index of $x(b-a,a,\cdot)|_U$ can be calculated by means of the Euler-Poincar\'{e} characteristic of $W$ and $W^-_a$ as follows
\begin{equation*}
\mathrm{ind}(x(b-a,a,\cdot)|_U) = \chi(W_a) - \chi(W^-_a).
\end{equation*}
In particular, if $\chi(W_a) - \chi(W^-_a) \ne 0$ then $x(b-a,a,\cdot)|_U$ has a fixed point in $W_a$.
\end{theorem}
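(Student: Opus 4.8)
The plan is to combine the Wa\.{z}ewski retract method with the Lefschetz--Hopf fixed point theorem, following the philosophy that a periodic segment induces a self-map of a compact pair whose Lefschetz number is computable. Throughout write $\Phi = x(b-a,a,\cdot)$ for the Poincar\'{e} (time-$(b-a)$) map, and recall that here $W_a = Z$ and $W_t^{--} = W_a^-$ for every $t \in [a,b]$. First I would establish that $U$ is open in $W_a$. Since solutions depend continuously on initial conditions and the defining condition of $U$ asks the trajectory to remain in $W_t \setminus W_t^{--}$ over the \emph{whole} compact interval $[a,b]$, a point $x_0 \in U$ has a trajectory staying at positive distance from the closed essential exit set $W^{--}$; compactness of $W$ and $W^-$ (the block hypotheses) makes this gap uniform, so an entire neighborhood of $x_0$ also stays in $W_t \setminus W_t^{--}$. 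The same distance estimate shows that the set of fixed points of $\Phi|_U$ is closed in the compact set $W_a$ and stays away from $\partial U$, hence is compact.

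Next I would introduce the Wa\.{z}ewski exit-time function $\tau(x_0)$, the first time the trajectory of $x_0 \in W_a$ leaves $W$. Using that $W^-$ is compact and that the simple-segment hypothesis $W^-_{t_1}=W^-_{t_2}$ forces every exit to occur through $W^{--} = [a,b]\times W_a^-$, one checks that $\tau$ is continuous on the set of points that do exit before time $b$ and that leaving is ``clean'' (trajectories cross the exit set and do not return). With $\tau$ in hand I would define an auxiliary continuous self-map of the pair
$$\bar\Phi : (W_a, W_a^-) \to (W_a, W_a^-)$$
that agrees with $\Phi$ on $U$ but sends each point leaving $W$ before time $b$ into $W_a^-$ along its own trajectory (evaluated at the exit time and projected). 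By construction $\bar\Phi$ maps $W_a^-$ into $W_a^-$ and pushes every exiting point into $W_a^-$.

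I would then identify the index with a Lefschetz number. The localization step is the crux: since a point of $W_a^-$ is pushed out and so cannot be fixed, all fixed points of $\bar\Phi$ lie in $U$, where $\bar\Phi = \Phi$; by the excision/commutativity and homotopy properties of the fixed point index this yields $\mathrm{ind}(\Phi|_U) = \mathrm{ind}(\bar\Phi) = \Lambda(\bar\Phi)$, the Lefschetz number of $\bar\Phi$ as a self-map of the compact ANR-pair $(W_a, W_a^-)$ (Lefschetz--Hopf). Finally, because $W = [a,b]\times Z$ is a product with constant exit set, the natural identification of $(W_b, W_b^-)$ with $(W_a, W_a^-)$ is the identity, and the flow furnishes a homotopy of pairs between $\bar\Phi$ and this identity; hence $\bar\Phi$ induces the identity on $H_*(W_a, W_a^-)$ and
$$\Lambda(\bar\Phi) = \sum_i (-1)^i \operatorname{rank} H_i(W_a, W_a^-) = \chi(W_a, W_a^-) = \chi(W_a) - \chi(W_a^-),$$
where the last equality is the long exact sequence of the pair. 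The concluding assertion that a nonzero value forces a fixed point is the standard nontriviality of the index.

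The main obstacle is the Wa\.{z}ewski part: constructing the exit-time function $\tau$ as a genuinely continuous object and verifying that $\bar\Phi$ is a well-defined continuous map of pairs, which requires controlling trajectories near the exit set so that ``leaving $W$'' is stable. This is precisely where the compactness of $W$ and $W^-$ and the simple-segment hypothesis are indispensable, and it carries the real topological-dynamical content; once $\bar\Phi$ is in place, the homotopy to the identity and the Lefschetz computation are comparatively formal.
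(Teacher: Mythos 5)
First, note that the paper itself offers no proof of this statement: Theorem~\ref{th1} is quoted from the cited work of Srzednicki, W\'ojcik and Zgliczy\'nski, so there is no in-paper argument to compare against. Your sketch does follow the strategy of that source (Wa\.{z}ewski exit-time construction plus Lefschetz--Hopf), and the openness of $U$, the compactness of the fixed point set, the continuity of the capped exit time $\tau$, and the homotopy of $\bar\Phi$ to the identity of the pair $(W_a,W_a^-)$ are all handled essentially correctly.

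There is, however, a genuine gap at the localization step, and it sits exactly where the term $-\chi(W_a^-)$ has to come from. Your map $\bar\Phi(x_0)=x(\min(\tau(x_0),b-a),a,x_0)$ (projected to $Z$) does \emph{not} push points of $W_a^-$ off themselves: a point $x_0\in W_a^-$ has exit time $\tau(x_0)=0$, so $\bar\Phi(x_0)=x_0$, and hence \emph{every} point of $W_a^-$ is a fixed point of $\bar\Phi$. The assertion ``all fixed points of $\bar\Phi$ lie in $U$, so $\mathrm{ind}(\Phi|_U)=\mathrm{ind}(\bar\Phi)=\Lambda(\bar\Phi)$'' is therefore false as stated; taken literally, the absolute Lefschetz--Hopf theorem applied to $\bar\Phi\simeq\mathrm{id}_{W_a}$ gives total index $\chi(W_a)$, not $\chi(W_a)-\chi(W_a^-)$. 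The missing ingredient is additivity of the fixed point index: one must split $\mathrm{Fix}(\bar\Phi)$ into the part in $U$ (where $\bar\Phi=\Phi$) and the part contained in $W_a^-$, and then show that the contribution of a neighborhood of $W_a^-$ equals $\chi(W_a^-)$ --- e.g.\ by exhibiting a fixed-point-index-preserving homotopy from $\bar\Phi$ near $W_a^-$ to (retraction onto $W_a^-$)$\,\circ\,\mathrm{id}$, with no fixed points appearing on the boundary of that neighborhood during the homotopy. Only after this subtraction does $\mathrm{ind}(\Phi|_U)=\chi(W_a)-\chi(W_a^-)$ follow; invoking the ``relative Lefschetz number of the pair'' does not by itself license equating it with the index over $U$. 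You should either carry out this additivity/excision argument explicitly or redefine $\bar\Phi$ so that its fixed points genuinely avoid $W_a^-$ (which cannot be done by a map preserving $W_a^-$ unless $\chi(W_a^-)=0$, a hypothesis not available in the general theorem even though it happens to hold in this paper's application).
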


\subsection{Main theorem}
\label{subsec2_3}

In this subsection, we prove our main result and illustrate it with examples closely related to the problem from~\cite{courant1941elementary} concerning the falling-free motion of an inverted pendulum with a moving pivot point.

\begin{theorem}
\label{MyMainTh}
Suppose that for~(\ref{MainEq}) the following conditions are satisfied 
\begin{enumerate}
\item The Euler-Poincar\'{e} characteristic of $M$ is non-zero,
\item There exists a constant $d > 0$ such that
\begin{equation*}
\inf_{\substack{t \in [0, T],\, q \in M \\ \| p \| > d}} \gamma(t,q,p) > 0,
\end{equation*}
\item $F$ is a bounded function,
\item For any $t_0 \in \mathbb{R}$ and $(q_0, p_0) \in T(\partial M)$ there is an $\varepsilon > 0$ such that
\begin{equation}
\label{Cond-IV}
q(t,t_0,q_0,p_0) \notin M, \quad \mbox{for all} \quad t \in (0, \varepsilon).
\end{equation}
\end{enumerate}
Then there exists a solution $(q, p) \colon \mathbb{R} \to TM^+$ of~(\ref{MainEq}) such that 
\begin{equation*}
q(t) = q(t+T), \quad p(t) = p(t+T), \quad q(t) \in M \setminus \partial M, \quad\mbox{for all}\quad t \in \mathbb{R}.
\end{equation*}
\end{theorem}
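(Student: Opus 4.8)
The plan is to construct a simple periodic segment $W$ over $[0,T]$ in the extended phase space $\mathbb{R}/T\mathbb{Z} \times TM^+$ and apply Theorem~\ref{th1}. First I would write the system~(\ref{MainEq}) (after eliminating $\lambda$ by projection onto $T_qM^+$) as a time-dependent vector field $v(t,x)$ with $x = (q,p) \in TM^+$, and consider the autonomous setting $M = TM^+$ of Theorem~\ref{th1}. The candidate block is
\begin{equation*}
W = [0,T] \times Z, \qquad Z = \{ (q,p) \in TM^+ \colon q \in M,\ \tfrac{1}{2}\|p\|^2 \le c \},
\end{equation*}
where $c > 0$ is the constant furnished by Lemma~\ref{Lemma-I}. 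Thus $W_t = Z$ is the portion of the tangent bundle lying over $M$ with kinetic energy bounded by $c$; it is compact because $M$ is compact and the velocity is bounded. The whole point of choosing the energy level $c$ is that Lemma~\ref{Lemma-I} guarantees $\frac{d}{dt}T < 0$ on the set $\{\tfrac12\|p\|^2 = c\}$, so trajectories cross the kinetic-energy cap strictly inward and never exit through it.

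Next I would identify the exit set $W^-$ and verify the segment is simple. The boundary of $Z$ splits into two pieces: the ``energy'' part $\{\tfrac12\|p\|^2 = c\}$, which by Lemma~\ref{Lemma-I} is entering, so it contributes nothing to $W^-$; and the ``spatial'' part over $\partial M$, namely $\{(q,p) \colon q \in \partial M,\ \tfrac12\|p\|^2 \le c\}$. Hypothesis~4, the external-tangency condition~(\ref{Cond-IV}), is precisely what makes the spatial part an exit set: every trajectory touching $q \in \partial M$ immediately leaves $M$, so points over $\partial M$ are in $W^-$ with the uniform-$\delta$ requirement of the definition holding because the vector field is autonomous in the relevant sense and the tangency is external. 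Since the right-hand side of~(\ref{MainEq}) is $T$-periodic in $t$, the set $Z$ and its exit set are independent of $t$, which gives $W = [0,T] \times Z$ and $W^-_{t_1} = W^-_{t_2}$ for all $t_1, t_2 \in [0,T)$; hence $W$ is a simple periodic segment and its essential exit set is $W^{--} = [0,T] \times W_0^-$ with $W_0^- = \{q \in \partial M,\ \tfrac12\|p\|^2 \le c\}$.

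The final step is the index computation. By Theorem~\ref{th1} a fixed point of the Poincar\'e map exists provided $\chi(W_0) - \chi(W_0^-) \ne 0$, and such a fixed point is exactly a $T$-periodic solution staying in $W_0 \setminus W_0^{--}$, i.e.\ with $q(t) \in M \setminus \partial M$ for all $t$. So I must compute these two characteristics. The set $W_0 = Z$ is a disk bundle (the $c$-ball bundle of $TM$) over $M$, which deformation-retracts onto $M$, giving $\chi(W_0) = \chi(M)$. The set $W_0^- $ fibers over $\partial M$, a disjoint union of circles, with fiber a closed disk, so it deformation-retracts onto $\partial M$, which has $\chi(\partial M) = 0$; hence $\chi(W_0^-) = 0$. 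Therefore $\chi(W_0) - \chi(W_0^-) = \chi(M) \ne 0$ by hypothesis~1, and Theorem~\ref{th1} yields the desired periodic solution. I expect the main obstacle to be the careful verification that the spatial boundary over $\partial M$ is genuinely the exit set in the sense of the definition --- in particular that the external-tangency hypothesis~4 forces the required uniform immediate exit and that no trajectory can leave through the energy cap --- together with checking that $W^-$ is compact so that $W$ is a bona fide Wa\.{z}ewski block; the homotopy-type identifications of $W_0$ and $W_0^-$ are then routine.
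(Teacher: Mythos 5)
Your overall strategy --- the block $W = [0,T]\times\{(q,p)\colon q\in M,\ \tfrac12 p^2\le c\}$ with $c$ supplied by Lemma~\ref{Lemma-I}, the observation that the kinetic-energy cap is an entrance set, and the application of Theorem~\ref{th1} via $\chi(W_0)-\chi(W_0^-)=\chi(M)\ne 0$ --- is exactly the paper's. But there is a genuine error in your identification of the exit set. You claim that hypothesis~4 makes \emph{every} point of $\{(q,p)\colon q\in\partial M,\ \tfrac12 p^2\le c\}$ an exit point (``every trajectory touching $q\in\partial M$ immediately leaves $M$''). This is false: if $q\in\partial M$ and $p$ points strictly into the interior of $M$, the trajectory immediately enters $M\setminus\partial M$ and remains in $W$ for a short time, so such a point is not in $W^-$. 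The correct essential exit set over $\partial M$ is only the closed half-disk bundle $\{q\in\partial M,\ \tfrac12 p^2\le c,\ (\nu_q,p)\ge 0\}$, where $\nu_q\in T_qM^+$ is the outward normal to $\partial M$.

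This misidentification also obscures the actual role of hypothesis~4, which is the crux of the argument. Points with $(\nu_q,p)>0$ are exit points for free, and points with $(\nu_q,p)<0$ are never exit points; the delicate case is the tangency set $(\nu_q,p)=0$, i.e.\ $(q_0,p_0)\in T(\partial M)$. Condition~(\ref{Cond-IV}) is imposed precisely to force these tangent points to be exit points, so that $W^-$ coincides with the \emph{closed} set $\{(\nu_q,p)\ge 0\}$ and is therefore compact; without it $W^-$ could fail to be closed, $W$ would not be a Wa\.{z}ewski block, and Theorem~\ref{th1} would not apply. Your final index computation survives only by accident: the half-disk bundle also deformation-retracts onto $\partial M$, a finite union of circles, so $\chi(W_0^-)=0$ either way. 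But as written, the verification that $W$ is a simple periodic segment --- the step you yourself flag as the main obstacle --- rests on a false description of the dynamics at the boundary and needs to be redone with the exit set restricted to the outward-or-tangent directions.
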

\begin{proof}
Let us consider the following compact subset $W$ of~$[0,T]\times TM^+$
\begin{equation*}
W = \{ 0 \leqslant t \leqslant T, (q, p) \in TM^+ \colon q \in M, \frac{p^2}{2} \leqslant c \}
\end{equation*}
where $c>0$ is the constant obtained from~(\ref{Lemma-I}).

From~(\ref{Lemma-I}) we also have that if $(t,q,p) \in W^{--}$ then $q \in \partial M$. Let $\nu_q \in T_q M^+$ be a normal vector to $\partial M$ at point $q \in \partial M$ such that if for $p \in T_q M^+$ we have $(\nu_q, p) > 0$, then a solution starting from $(t,q,p)$ at least locally leaves $M$. From the above definition of $\nu_q$, we obtain
\begin{equation*}
\{  0 \leqslant t \leqslant T, (q, p) \in TM^+ \colon q \in M, \frac{p^2}{2} \leqslant c, (\nu_q, p) > 0 \} \subset W^{--}.
\end{equation*}
Moreover, we also have
\begin{equation*}
\{  0 \leqslant t \leqslant T, (q, p) \in TM^+ \colon q \in M, \frac{p^2}{2} \leqslant c, (\nu_q, p) < 0 \} \cap W^{--} = \varnothing.
\end{equation*}
Since we assume~(\ref{Cond-IV}), then $W^{--}$ is compact:
\begin{equation*}
W^{--} = \{  0 \leqslant t \leqslant T, (q, p) \in TM^+ \colon q \in M, \frac{p^2}{2} \leqslant c, (\nu_q, p) \geqslant 0 \}.
\end{equation*}
Therefore, $W^{-}$ is also compact and $W$ is a simple periodic segment over $[0,T]$.

  Since $M$ is compact, then boundary $\partial M$ consists of a finite number of curves that are homeomorphic to circles. Moreover, $W_0^- = W_0^{--}$ is homotopic to a finite number of circles and we have $\chi(W_0^-) = 0$. Finally, since $\chi(W_0) = \chi(M) \neq 0$, then we can apply~(\ref{th1}).
\end{proof}

The following example presents the quite counter-intuitive behaviour of a mechanical system in which a periodically forced inverted pendulum is moving periodically and never falls, i.e. it never becomes horizontal. Assuming motion with friction, this means that in the considered system, we observe both effects described in~\cite{furi1991forced} and~\cite{courant1941elementary} simultaneously.

\begin{example}
Consider an inverted spherical pendulum in a gravitational field moving with viscous friction $F_{friction} = -\gamma p$, $\gamma>0$. Suppose that its pivot point moves with a prescribed periodic law parallel to the horizontal plane. Then there exists a periodic solution such that along this solution the pendulum never becomes horizontal, i.e. it never falls.
In this case, $M$ is a half-sphere and $\partial M$ is the horizontal great circle. Condition~(\ref{Cond-IV}) is satisfied since when the pendulum is horizontal, there is no vertical force of inertia and the only vertical force that acts on the massive point is the force of gravity.
\end{example}

The above example can be generalized to the case of a compact surface if we assume that the considered surface is vertical at its boundary and the boundary itself is horizontal.

\begin{figure}[h!]
    \centering
    \def\svgwidth{240 pt}
    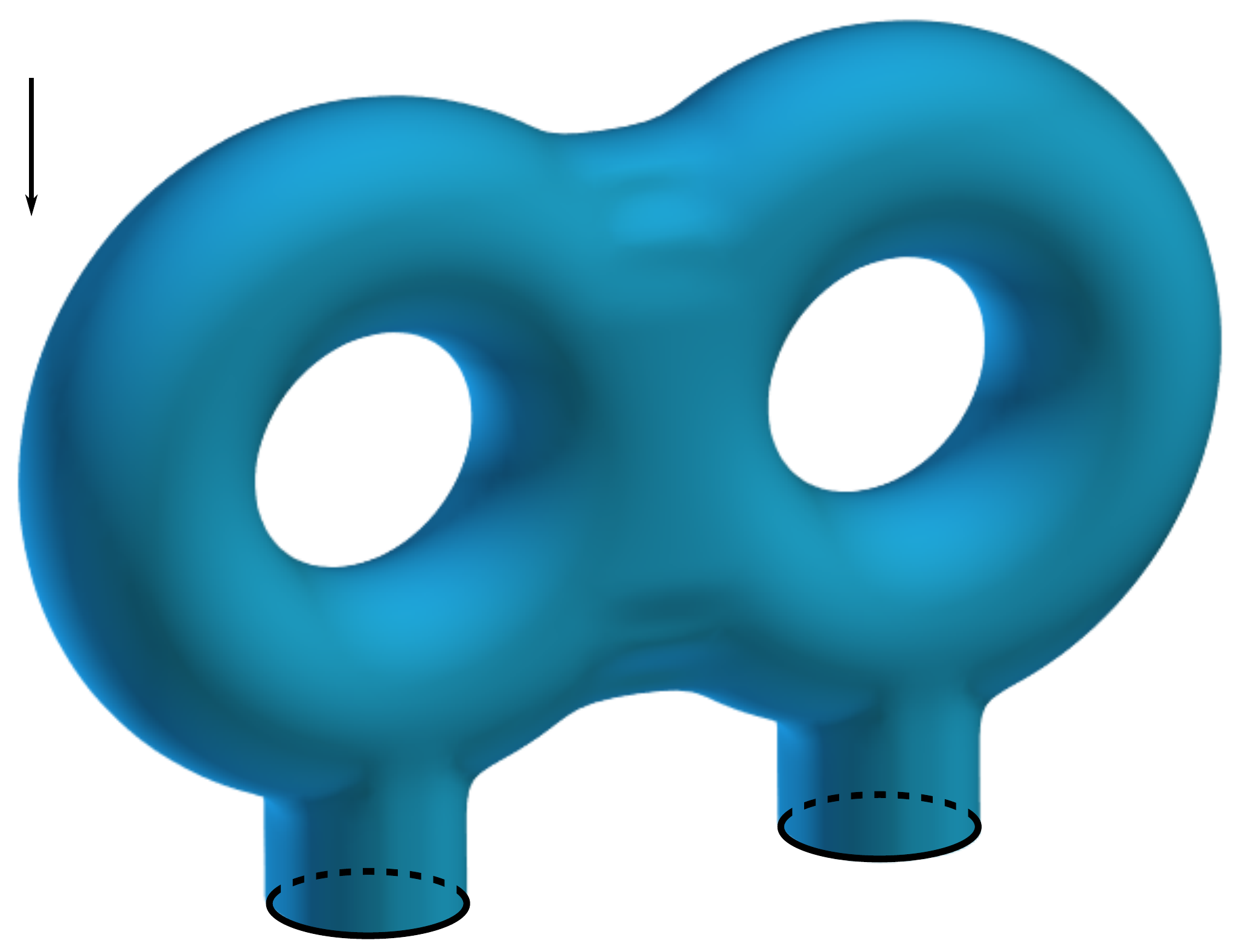
    \caption{An example of a surface with non-zero Euler-Poincar\'{e} characteristic, vertical at its horizontal boundary and which is above its boundary (at least locally). If we assume that the surface is moving parallel to the horizontal plane, then $F_{inertia}$ is horizontal.}
    \label{Fig1}
\end{figure}

\begin{example}
Consider a massive point on a moving compact surface $M$ with a boundary. Here we also assume that the point is in a gravitational field, it moves with viscous friction, the surface moves with a prescribed periodic law parallel to the horizontal plane and the Euler-Poincar\'{e} characteristic of the surface is non-zero. To satisfy~(\ref{Cond-IV}), we also assume that all boundary curves are horizontal and the surface is vertical at the boundary, i.e. its tangent planes are vertical. Moreover, just like in the case of an inverted pendulum, we suppose that locally our surface is above its boundary curves. The last condition means that any solution starting at $\partial M$ and tangent to it at the initial moment of time `falls down through the boundary', i.e. locally it leaves $M$. Indeed, the force of inertia is always horizontal and if the point is at the boundary, then the force of constraint is also horizontal. Therefore, the only non-zero vertical force is the force of gravity. From~(\ref{MyMainTh}), it also follows that in this case, which can be considered as a generalized inverted pendulum with a moving pivot point, there exists a periodic solution that always stays in $M \setminus \partial M$.
\end{example}

\section{Conclusion}
\label{sec3}



In conclusion, the obtained periodicity seems to be a property of the considered type of systems in the frictionless case as well. Naturally, one of the main arguments toward this is that the constant of friction can be chosen to be arbitrarily small. The existence of a periodic solution without falling for a planar pendulum moving without friction~\cite{polekhin2014examples} also can be considered here as an argument. 

We also believe that the presented topological approach, based on results from~
\cite{srzednicki2005fixed}, can be applied to different types of pendulum-like systems and might be considered as a possible alternative to functional analysis and variational approaches in applications of this type.



\bibliographystyle{elsarticle-num}


\bibliography{sample}

\end{document}